\tikzstyle{vertex}=[circle, draw, fill, inner sep=0pt, minimum width=4pt]
\newtheorem{theorem}{Theorem}
\newtheorem{claim}[theorem]{Claim}
\newtheorem{proposition}[theorem]{Proposition}
\newtheorem{conjecture}{Conjecture}
\newtheorem{problem}{Problem}
\newcommand{\eps}{\varepsilon}
\DeclareMathOperator{\unif}{Uniform}
\DeclareMathOperator{\deficit}{def}
\DeclarePairedDelimiter{\ceil}{\lceil}{\rceil}
\DeclarePairedDelimiter{\floor}{\lfloor}{\rfloor}
\author{Carla Groenland\affiliationmark{1}\thanks{Partially supported by the ERC Horizon 2020 project CRACKNP (grant no. 853234) and the Marie Skłodowska-Curie grant GRAPHCOSY (number 101063180).}
  \and Tom Johnston\affiliationmark{2,3}
  \and Jamie Radcliffe\affiliationmark{4}
  \and Alex Scott\affiliationmark{5}\thanks{Supported by EPSRC grant EP/V007327/1.}}
\title{Short reachability networks}
\affiliation{
  TU Delft, Delft, The Netherlands\\
  University of Bristol, Bristol, UK\\
  Heilbronn Institute for Mathematical Research, Bristol,  UK\\
  University of Nebraska--Lincoln, Lincoln, USA\\
  University of Oxford, Oxford, UK}
\keywords{Permutation networks, transposition shuffles, reachability networks, shuffling} 
\begin{document}
\publicationdata{vol. 27:3}{2025}{14}{10.46298/dmtcs.12454}{2023-10-22; None}{2025-10-01}
\maketitle

\begin{abstract}
	We investigate the following generalisation of permutation networks.
	We say a sequence $T=(T_1,\dots,T_\ell)$ of transpositions in $S_n$ forms a $t$-reachability network if, for every choice of $t$ distinct points $x_1, \dots, x_t\in \{1,\dots,n\}$, there is a subsequence of $T$ whose composition maps $j$ to $x_j$ for every $1\leq j\leq t$. When $t=n$, any permutation in $S_n$ can be created and $T$ is a permutation network. Waksman [\emph{JACM}, 1968] showed that the shortest permutation networks have length about $n \log_2(n)$. In this paper, we investigate the shortest $t$-reachability networks for other values of $t$. Our main result settles the case of $t=2$: the shortest $2$-reachability network has length $\lceil 3n/2\rceil-2 $. For fixed $t \geq 3$, we give a simple randomised construction which shows that there exist $t$-reachability networks with $(2+o_t(1))n$ transpositions. We also study the effect of restricting to star-transpositions, i.e.~restricting all transpositions to have the form $(1, \cdot)$.
\end{abstract}

\section{Introduction}
Let $S_n$ be the symmetric group on $n$ elements, and write $(a,b)\in S_n$ for the transposition that swaps $a$ and $b$. Let $T_1, \dots, T_\ell$ be a sequence of transpositions with $T_i = (a_i, b_i)$ for $i = 1, \dots, \ell$. The sequence forms a \emph{permutation network} if for every choice of $\sigma \in S_n$, there is some subsequence $T_{i_1}, \dots, T_{i_{\ell'}}$ such that \[\sigma  = (a_{i_1}, b_{i_1}) \dotsb (a_{i_{\ell'}}, b_{i_{\ell'}}).\]
Equivalently, if counters are placed on each vertex of a complete graph on $n$ vertices, then a permutation network can reach any configuration of counters by following the switches prescribed by a suitable subsequence of $T_1,\dots,T_\ell$.

Since each transposition in a permutation network can only increase the number of possible final states by a factor of two, the network must have at least $\log_2(n!) = \sum_{i=1}^n \log_2(i)$ transpositions.
In the other direction, a construction of \cite{waksman1968permutation}
shows that permutation networks can be constructed with at most $\sum_{i=1}^n \ceil{\log_2(i)}$ transpositions when $n$ is a power of two (and this was independently shown by \cite{goldstein1967synthesis}). This bound was later extended 
to all values of $n$ by \cite{beauquier2002arbitrary} (although, according to \cite{kautz1968cellular}, this was known to Green much earlier).

\begin{theorem}[\cite{waksman1968permutation,goldstein1967synthesis,beauquier2002arbitrary}]
	\label{thm:waksman}
	There is a permutation network on $n$ elements using $\sum_{i=1}^n \ceil{\log_2(i)}$ transpositions.
\end{theorem}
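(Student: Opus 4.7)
I would prove this by strong induction on $n$, using Waksman's recursive divide-and-conquer construction. The base cases $n=1$ (empty sequence, length $0$) and $n=2$ (single transposition $(1,2)$, length $1$) match the target $\sum_{i=1}^n\lceil\log_2 i\rceil$ immediately.

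For the inductive step, split $\{1,\dots,n\}$ into halves $A=\{1,\dots,\lceil n/2\rceil\}$ and $B=\{\lceil n/2\rceil+1,\dots,n\}$, with pairs $(i,\lceil n/2\rceil+i)$ for $i=1,\dots,\lfloor n/2\rfloor$ (leaving one element of $A$ unpaired when $n$ is odd). The network $W_n$ consists of an \emph{input layer} of one transposition across each pair, followed by recursively constructed networks $W_{\lceil n/2\rceil}$ on $A$ and $W_{\lfloor n/2\rfloor}$ on $B$, followed by an \emph{output layer} of transpositions across the pairs, with one specific output transposition suppressed when $n$ is even (Waksman's saving). Verifying the length reduces to showing that the recursion $L(2k)=2L(k)+2k-1$ and $L(2k+1)=L(k+1)+L(k)+2k$ solves to $\sum_{i=1}^n\lceil\log_2 i\rceil$, a routine induction.

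The substantive content is proving correctness: given a target $\sigma\in S_n$, we must choose a subsequence realising $\sigma$. Since the inductive hypothesis guarantees that the two middle blocks realise \emph{any} permutation on each half, the task reduces to choosing input and output swaps so that the element destined for each position in $A$ (respectively $B$) enters the correct subnetwork. I would encode this as a $2$-colouring problem on an auxiliary graph whose vertices are the pairs and whose edges come from two sources: the pairing itself and the constraints induced by $\sigma$ (linking pairs that share a target position in a given half). A standard argument shows this graph is $2$-regular (or has boundary vertices of degree $1$ in the odd case), so it is a disjoint union of paths and even cycles, and any proper $2$-colouring yields a valid assignment of swaps.

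The main obstacle I anticipate is handling the corner cases cleanly. Waksman's saving pre-commits one pair's output swap, and when $n$ is odd one element is unpaired; in both situations we must verify that a valid $2$-colouring still exists under this constraint. Each connected component of the auxiliary graph admits exactly two valid colourings, so one free bit per component absorbs a single pre-committed choice, and the unpaired element in the odd case can be treated as a boundary condition that leaves enough freedom in its component. Making this combinatorial argument fully rigorous---particularly pinning down precisely which pair can carry Waksman's saving and checking it is consistent with the enforced pairing---is where the bulk of the proof work lies.
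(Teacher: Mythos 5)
The paper does not prove this statement: it is quoted as a known result of Waksman and of Beauquier--Darrot, so there is no internal proof to compare against. Your outline is a faithful reconstruction of the standard argument from those sources: the Bene\v{s}-style recursive halving with input and output layers of pair-swaps, the reduction of correctness to properly $2$-colouring an auxiliary graph whose edges alternate between input-pair constraints and output-pair constraints (hence all cycles are even), and Waksman's observation that one output switch can be deleted because each component of that graph admits two valid colourings, one of which sets the designated switch to ``straight''. I checked that your recurrences $L(2k)=2L(k)+2k-1$ and $L(2k+1)=L(k+1)+L(k)+2k$ do solve to $\sum_{i=1}^n\lceil\log_2 i\rceil$ (using $\lceil\log_2(2j-1)\rceil=\lceil\log_2(2j)\rceil=1+\lceil\log_2 j\rceil$ for $j\geq 2$), so the counting is sound. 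What you have is a correct proof skeleton rather than a complete proof: the parts you flag yourself --- verifying that the unpaired element in the odd case only pre-commits the colouring of a single component, and that the saved switch in the even case can always be chosen consistently --- are exactly the details that need to be written out, and they go through by the ``one free bit per component'' argument you describe.
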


Permutation networks are also called \emph{rearrangeable non-blocking networks} and have been extensively studied due to their usefulness in communication networks, cryptographic applications and distributed computing (see e.g.~\cite{dally2004principles,falk2021secure,holland2022efficient,smart2019distributing}).

While $\Theta(n \log n)$ transpositions are needed for a full permutation network, we can still hope to get good distributional properties with much shorter sequences of transpositions.
In this paper, we consider networks in which a given set of counters can be moved to any set of positions.  We say a sequence of transpositions $T_1, \dots, T_\ell\in S_n$ is a  \emph{$t$-reachability network} if for every choice of $t$ distinct ordered points $x_1, \dots, x_t\in [n]=\{1,\dots,n\}$, there is some subsequence $T_{i_1}, \dots, T_{i_{\ell'}}$ such that the composition $(a_{i_1}, b_{i_1}) \dotsb (a_{i_{\ell'}}, b_{i_{\ell'}})$ maps $j \mapsto x_j$ for $1\leq j \leq t$ (so an $n$-reachability network is exactly a permutation network).
Similar networks have been studied before (see e.g.~\cite{hromkovic2009size,lenfant1978parallel,oruc1994study}), but without any attempt to obtain tight bounds. Our aim in this paper is to provide exact or nearly exact bounds where possible.

Let us first consider the case of $1$-reachability where we want to distribute one counter across $n$ positions. By noting that there are at most $2^\ell$ possible subsequences of a sequence of length $\ell$, we immediately get the lower bound $\ell \geq \log_2(n)$, but this is far from optimal. Indeed, a transposition $(a,b)$ can only move the counter if either $a$ or $b$ is a position where there may already be a counter, and the number of `reachable positions' can only increase by one for each transposition.
This implies $n-1$ transpositions are needed, and it is easy to find a tight example, e.g.~$(1,2),(1,3),\dots,(1,n)$.

Our main result is an exact bound for $2$-reachability.
\begin{theorem}
	\label{thm:2reach}
	Let $n\geq 2$. The shortest $2$-reachability network on $n$ elements contains $\ceil{3n/2}-2$ transpositions.
\end{theorem}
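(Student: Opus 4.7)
For the \textbf{upper bound}, I would construct a 2-reachability network of length $\lceil 3n/2 \rceil - 2$ using a block structure. For odd $n = 2k+1$, start with the initial gadget $(1,2), (1,3), (2,3)$ and append $k-1$ triples of the form $(1, 2j), (2, 2j+1), (2j, 2j+1)$ for $j = 2, \ldots, k$; the total is $3 + 3(k-1) = 3k = \lceil 3n/2 \rceil - 2$. For even $n = 2k$, trim the initial gadget to $(1,2), (1,3)$, append $k-2$ such triples, and finish with a ``pair gadget'' $(2, n), (1, n)$; the total is $2 + 3(k-2) + 2 = 3k - 2 = \lceil 3n/2 \rceil - 2$. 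Verifying 2-reachability is a case analysis on which blocks contain $x_1$ and $x_2$: within a single triple, the three transpositions suffice to route either element to either of the block's two positions, and the linear order between blocks gives enough flexibility to combine an ``element 1 routing'' with an ``element 2 routing'' without conflict. The even-$n$ case needs care in the internal ordering, since a naive arrangement can make pairs like $(1,3)$ unreachable.

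For the \textbf{lower bound}, let $T_1, \ldots, T_\ell$ be a 2-reachability network, let $G$ be its multigraph of transpositions, and set $d(v) := \deg_G(v)$. I aim to show $\sum_v d(v) \ge 3n - 3$ when $n$ is odd and $\ge 3n - 4$ when $n$ is even; combined with $\sum_v d(v) = 2\ell$, this yields $\ell \ge \lceil 3n/2 \rceil - 2$. The structural heart of the argument is that low-degree vertices are highly constrained: if $d(v) = 1$ and the unique incident transposition is $T_i = (a, v)$, then every subsequence bringing an element to $v$ must end with $T_i$, and $T_i$ must do double duty, bringing both element $1$ (for configurations $(v, x_2)$) and element $2$ (for configurations $(x_1, v)$) to $v$ via compatible subsequences. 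This imposes tight conditions on the transpositions surrounding $T_i$. I plan to convert this into a deficit accounting: degree-$1$ vertices contribute $2$ units of deficit and degree-$2$ vertices contribute $1$, with the total deficit at most $3$ for odd $n$ or $4$ for even $n$.

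The \textbf{hard part} is making this accounting tight. A naive claim such as ``every vertex outside $\{1,2\}$ has degree at least $3$'' is false, as my own construction has vertices of both degrees $1$ and $2$; the argument must be sensitive to the order of the transposition sequence rather than only its underlying multigraph. I expect a careful charging scheme in which each low-degree vertex is charged to specific nearby transpositions (typically those forced to serve both elements through $v$), with a global accounting showing that no transposition is charged twice and that the total forces $\ell \ge \lceil 3n/2 \rceil - 2$. The parity-dependent constant ($3$ versus $4$) suggests the argument must distinguish between networks in which the low-degree vertices cluster near $\{1,2\}$ and those in which they spread out; threading this case analysis uniformly will be the main obstacle.
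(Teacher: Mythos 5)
Your upper bound is fine in outline: the block construction has the right length for both parities and differs from the paper's (which uses a star from $1$ to the odd positions, a star from $2$ to the even positions, a matching of consecutive pairs, and an extra $(1,2)$ for odd $n$), but both are of the same flavour and both leave the verification of $2$-reachability as a routine check. The target inequality you set for the lower bound, $\sum_v d(v)\ge 3n-3$ for odd $n$ (equivalently $3n-4$, since the sum equals $2\ell$), is also correct and is tight on the extremal examples.

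However, your lower bound is a plan rather than a proof, and the missing piece is exactly the hard part. You rightly observe that a degree count on the multigraph alone cannot work and that the order of the transpositions must enter, but you do not supply the mechanism that converts ``vertex $v$ has low degree'' into ``some other transposition is forced'', nor any guarantee that the forced transpositions are distinct. The paper's proof does this as follows. First, colour a transposition \emph{black} if it is the first to touch one of its endpoints (so the black edges form a forest grown from $\{1,2\}$) and \emph{red} otherwise; after a harmless modification of the sequence the black edges form a spanning tree rooted at $\{1,2\}$, which already accounts for $n-1$ transpositions and reduces the problem to showing the sum of red degrees is at least $n-2$. Second --- and this is the key idea your sketch lacks --- one proves by induction up the black tree that every subtree $T_v$ with $v\notin\{1,2\}$ satisfies $\sum_{u\in T_v}(1-d_R(u))\le 1$, \emph{together with} a structural witness: when equality holds there is a vertex $\ell(v)$ in $T_v$ that can only be reached by entering $v$ along the black edge from its parent and then following black edges. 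This witness is what forces a red edge at $v$ strictly between any two black edges to deficit-$1$ children (else the pair $(\ell(x),\ell(y))$ is unreachable) and after the last such black edge (else $(v,\ell(z))$ is unreachable), and it is what makes the charges land on distinct transpositions. Finally, the roots $1$ and $2$ need a separate argument exploiting the distinguishability of the two counters, since the black edge $(1,2)$ can reload a root without any red edge. Your proposed accounting of ``$2$ units per degree-$1$ vertex, $1$ unit per degree-$2$ vertex, total at most $3$ or $4$'' is the right conclusion, but without an inductive invariant of the above kind you have not shown it.
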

We note that it is straightforward to find such a sequence, and the technical difficulty lies in proving the lower bound.

We also give a simple randomised construction which shows that one can achieve $t$-reachability using $(2 + o_t(1))n$ transpositions. Surprisingly, the coefficient of the leading term is independent of $t$.
\begin{theorem}
	\label{thm:treach}
	Let $t\geq 3$. There is a $t$-reachability network on $n$ elements of length at most $(2 + o_t(1))n$.
\end{theorem}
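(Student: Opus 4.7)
\textbf{Proof plan for Theorem \ref{thm:treach}.} My plan is to exhibit a randomised construction of length $(2+\eps)n$, for any fixed $\eps>0$ and $n$ large depending on $\eps$ and $t$, and to show via the probabilistic method that it forms a $t$-reachability network with positive probability. A natural candidate is to concatenate a deterministic backbone that already provides $1$-reachability, e.g.\ the star $T_i=(1,i+1)$ for $i\in[n-1]$, with $(1+\eps)n+O(1)$ independent uniformly random transpositions drawn from $\binom{[n]}{2}$. The star ensures that every vertex is touched by some transposition (necessary because purely random transpositions at constant density leave $\Omega(n)$ isolated vertices, precluding $1$-reachability), while the random portion provides the additional flexibility needed to route multiple tokens simultaneously.

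I would recast $t$-reachability as a vertex-disjoint paths problem in the time-expanded graph $H$ on vertex set $[n]\times\{0,1,\ldots,L\}$: for each $i\in[L]$, $H$ contains the `stay' edges $(v,i-1)(v,i)$ for every $v\in[n]$ and, writing $T_i=(a_i,b_i)$, the `swap' edges $(a_i,i-1)(b_i,i)$ and $(b_i,i-1)(a_i,i)$. A valid subsequence realising $j\mapsto x_j$ for every $j\in[t]$ corresponds precisely to a system of $t$ vertex-disjoint paths in $H$ from $(j,0)$ to $(x_j,L)$; indeed, given such paths one obtains a consistent subsequence by including $T_i$ exactly when some path uses a swap edge at time $i$, and vertex-disjointness automatically prevents collisions. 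For a fixed target tuple I would then route the tokens sequentially, building each path to avoid the $O(t)=O(1)$ vertices blocked by earlier paths at each time step; the random portion supplies the bulk of each path, while the star provides the handful of corrective swaps through position $1$ needed to land on $x_j$ exactly.

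Finally, a union bound over the $\leq n^t$ ordered target tuples, combined with a per-tuple failure probability of $o(n^{-t})$, completes the argument. The main obstacle is the probabilistic analysis of the per-token routing: one must show that, with $O(1)$ positions forbidden at each time step, a path from source to target in $H$ exists with failure probability $o(n^{-t-1})$. I expect this to follow from a branching-process-style analysis of reachability-set growth in $H$, analogous to classical sparse random graph arguments, with the deterministic star providing an unconditional guarantee that final-stage corrections to $x_j$ are always available via position $1$.
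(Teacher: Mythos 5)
Your reduction of $t$-reachability to vertex-disjoint paths in the time-expanded graph is a correct reformulation, and the union-bound strategy is sound in principle, but the proposed random structure does not support the routing step; the gap is structural rather than a matter of sharpening the branching-process analysis. Among $(1+\eps)n$ uniform transpositions from $\binom{[n]}{2}$, each vertex is missed with probability about $e^{-2(1+\eps)}$, so with high probability there are $\Omega(n)$ positions touched by no random transposition at all, and in particular there exist consecutive positions $k,k+1$ both of this type. The only transpositions incident to $k$ and $k+1$ in your network are then the single star edges $(1,k)$ and $(1,k+1)$, which are adjacent in the sequence (whether the random block precedes or follows the star, nothing lies between them; and even if you interleave, only $O(1)$ of the random transpositions are incident to vertex $1$ in expectation, so most such consecutive pairs still have no transposition touching vertex $1$ between them). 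To land a counter on $k$ you must have it at position $1$ immediately before $(1,k)$; firing $(1,k)$ then leaves position $1$ empty of counters, and there is no way to reload position $1$ before $(1,k+1)$. Hence the tuple $(x_1,x_2)=(k,k+1)$ is unreachable, and the construction fails to be even a $2$-reachability network with high probability. Consequently no per-tuple failure bound of the form $o(n^{-t})$ can hold: the ``corrective swaps through position $1$'' you are relying on simply are not there.

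The paper's fix is to make the incidences at position $1$ plentiful and structured rather than uniform: every position $j>t$ receives exactly two star transpositions $(1,j)$, placed into two uniformly random phases out of $L=\floor{n^{1-\eps}}$ phases, and each phase is prefixed by the reloading transpositions $(1,2),\dots,(1,t)$ so that whichever counters are still parked in $[t]$ can be extracted during any phase. Reaching a target tuple then reduces to finding a system of distinct representatives assigning each target outside $[t]$ to its own phase, i.e.\ a matching in a random bipartite graph with left degrees $2$; Hall's condition for all sets of size at most $t$ holds with positive probability by a union bound, using $\eps<1/(t+1)$. The count is $2(n-t)$ for the paired star transpositions, plus $(t-1)L=o(n)$ for the reloads, plus $O(t\log t)$ for a terminal $t$-reachability network on $[t]$, giving $2n+o_t(n)$. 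If you want to salvage your approach, you would need to replace ``uniformly random transpositions'' by a distribution that guarantees at least two incidences per target position together with many well-spaced opportunities to reload position $1$ (or some other hub); at that point you have essentially rediscovered the phase construction.
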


Reachability questions are closely related to the problem of generating uniform random permutations.
A \emph{lazy transposition} $T=(a, b, p)$ is the random permutation
\[
	T=\begin{cases}
		(a,b) & \text{ with probability }p, \\
		1     & \text{ otherwise},
	\end{cases}
\]
where $1\in S_n$ denotes the identity permutation.

Note that the composition of a sequence of lazy transpositions is also a random permutation.  A \emph{transposition shuffle} is a sequence of independent lazy transpositions $T_1, \dots, T_\ell$ such that $T_1\cdots T_\ell\sim \unif(S_n)$, the uniform distribution over $S_n$.
Let $U(n)$ denote the minimum $\ell$ for which there exists a transposition shuffle on $n$ elements of length $\ell$.
\cite{angel2018perfect} asked whether $U(n)=\binom{n}2$. This was disproved by \cite{uniformity} who gave the best-known upper bound $U(n)\leq \frac23 \binom{n}2+O(n\log n)$.
For a lower bound, note that, by ignoring the probabilities, any transposition shuffle gives a permutation network, and so $U(n)=\Omega(n\log n)$.\footnote{Conversely, any permutation network can be used to give a sequence which achieves every permutation with non-zero probability, but not necessarily the uniform distribution. It is important here that a transposition shuffle achieves the uniform distribution exactly: \cite{czumaj2015random} showed that there are sequences of lazy transpositions of length $O(n \log n)$ which are very close to uniform.}
This simple bound is still the best-known, and there remains a very substantial gap between the best-known upper and lower bounds.

Stronger results are known when the transpositions are restricted to specific families.
In the case when all transpositions are of the form $(i, i+1)$,
\cite{angel2018perfect} showed that $\binom{n}2$ is best possible. However, with this restriction there is also a lower bound of $\binom{n}2$ from the `reachability' point of view: $\binom{n}{2}$ transpositions are needed to reach the `reverse permutation' that maps $i$ to $(n+1)-i$. It is an interesting open problem to prove lower bounds that are better than $\Omega(n\log n)$ in the general case.

\smallskip

Another natural family of transpositions are the \emph{star transpositions}, that is all
transpositions of the form $(1, \cdot)$ (so corresponding to the edges of a star $K_{1,n-1}$).
Observe that restricting to star transpositions does not affect the order of magnitude
of the number of transpositions needed for uniformity as any lazy transposition $(i,j,p)$ can be simulated by the three star transpositions $(1,i,1), (1,j,p), (1,i,1)$.

A modification of Theorem \ref{thm:2reach} shows that  $2$-reachability remains possible with around $3n/2$ transpositions, even when restricting to star transpositions.

\begin{theorem}
	\label{thm:2reachstar}
	For $n \geq 3$, the shortest $2$-reachability network on $n$ elements in which each transposition is a star transposition contains $\ceil{3(n-1)/2}$ transpositions.
\end{theorem}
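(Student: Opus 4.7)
The plan is to establish matching upper and lower bounds on the length of a shortest star 2-reachability network.

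For the upper bound I would give an explicit construction of length $\lceil 3(n-1)/2\rceil$. Verifying the small cases by hand, the sequences $(1,2),(1,3),(1,2)$ for $n=3$, and $(1,2),(1,3),(1,4),(1,2),(1,3)$ for $n=4$, and $(1,2),(1,3),(1,4),(1,2),(1,5),(1,3)$ for $n=5$ all work, suggesting a pattern that introduces each new vertex together with one ``pivot'' transposition chosen from $\{(1,2),(1,3)\}$. To verify 2-reachability, I would produce, for each ordered pair $(x_1,x_2)$ with $x_1\neq x_2$, an explicit subsequence whose composition sends $1\mapsto x_1$ and $2\mapsto x_2$. The main algebraic tools are the identity $(1,a)(1,b)(1,a) = (a,b)$ for $a,b\neq 1$ (used to realise transpositions $(2,v)$, corresponding to pairs $(1,v)$), and the cycle formula $(1,a_1)(1,a_2)\cdots(1,a_k)=(1,a_1,a_2,\ldots,a_k)$ for distinct $a_i\neq 1$ (which handles pairs $(x_1,x_2)\in\{3,\ldots,n\}^2$ via three-term subsequences of the form $x_1,2,x_2$). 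Routine but careful case analysis on whether $x_1,x_2$ lie in $\{1,2\}$ or $\{3,\ldots,n\}$ then exhausts all pairs.

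For the lower bound, the starting observation is that every star 2-reachability network is also a 2-reachability network in the general sense, so Theorem~\ref{thm:2reach} immediately gives length at least $\lceil 3n/2\rceil-2$. When $n=2k+1$ is odd this already equals $3k=\lceil 3(n-1)/2\rceil$ and we are done. When $n=2k$ is even, however, the general bound yields only $3k-2$ while the theorem asks for $3k-1$, so we need a star-specific argument for the extra $+1$. The strategy is to show directly that $3k-2$ star transpositions never suffice. Using the fact that reaching a pair $(1,v)$ with $v\in\{3,\ldots,n\}$ requires realising the transposition $(2,v)$, which in turn needs either two copies of $(1,2)$ sandwiching a $(1,v)$ or two copies of $(1,v)$ sandwiching a $(1,2)$, one derives lower bounds on the multiplicities $m_2$ and $m_v$. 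Combined with the obvious $m_v\geq 1$ for every $v\in\{3,\ldots,n\}$ and structural constraints arising from the pairs $(x_1,x_2)$ with $x_1,x_2\in\{3,\ldots,n\}$, this yields the improved bound $3k-1$ when $n$ is even.

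The hard step is closing this $+1$ gap for even $n$: the general lower bound is insensitive to the star restriction, so it must be strengthened by exploiting the particular rigidity of star transpositions, namely that counter 1 (and counter 2) can only move along edges incident to vertex 1, forcing each of their trajectories to alternate between position 1 and some other vertex. Turning this qualitative observation into a tight quantitative count of $(1,2)$- versus $(1,v)$-transpositions is where the main technical work lies; the upper bound, by contrast, reduces to verifying a finite list of explicit subsequences.
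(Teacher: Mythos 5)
Your high-level plan (explicit construction for the upper bound, strengthened counting for the lower bound) is reasonable, and your observation that for odd $n$ the general bound $\lceil 3n/2\rceil-2$ from Theorem~\ref{thm:2reach} already equals $\lceil 3(n-1)/2\rceil$ is a valid shortcut. However, both halves of your argument have genuine gaps. For the upper bound you verify $n=3,4,5$ and then say the examples are ``suggesting a pattern''; you never actually define the construction for general $n$, so there is nothing to verify by your proposed case analysis. (The paper's construction is: $(1,2)$; then $(1,4),(1,6),\dots$ through the even positions; then a full sweep $(1,2),(1,3),\dots,(1,n)$, with a ``twisted'' final sweep when $n$ is odd to save one transposition.)

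The more serious problem is the even case of the lower bound, which you yourself identify as the hard step but do not carry out, and whose key claim is false as stated. Reaching the pair $(1,v)$ does not require realising the transposition $(2,v)$, only some permutation fixing $1$ and sending $2\mapsto v$; and it does not force ``two copies of $(1,2)$ sandwiching a $(1,v)$ or two copies of $(1,v)$ sandwiching a $(1,2)$.'' For instance, the subsequence $(1,3),(1,2),(1,v),(1,3)$ sends counter $c_1$ out to $3$ and back while $c_2$ travels $2\to 1\to v$, using $(1,2)$ and $(1,v)$ only once each; the needed repetition can be hidden in a third symbol. So the multiplicity bounds you hope to derive for $m_2$ and $m_v$ do not follow, and the $+1$ for even $n$ remains unproved. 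The paper avoids the parity split entirely: it colours each transposition \emph{black} (unique occurrence of that pair), \emph{blue} (first of several occurrences) or \emph{red} (repeat occurrence), notes that black plus blue equals $n-1$ and that red is trivially at least blue, and then proves red is also at least black by exhibiting, between consecutive black transpositions $(1,x)$ and $(1,y)$ with $x,y\ge 3$ (and after the last one), a forced red transposition needed to reach the ordered pairs $(y,x)$ and $(1,z)$ respectively, with a small separate analysis of $(1,2)$. This yields red $\ge (n-1)/2$ and hence the bound $\lceil 3(n-1)/2\rceil$ uniformly in $n$. You would need either to adopt such a direct argument or to substantially repair your multiplicity analysis for even $n$.
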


Just as permutation networks strengthen to transposition shuffles, there is a natural strengthening of $t$-reachability to the stochastic setting.
A sequence of lazy transpositions $T_1,\dots,T_\ell$ is a \textit{$t$-uniformity network} if the probability that the composition $T_1\dots T_\ell$ maps the tuple $(1,\dots,t)$ to $(x_1,\dots,x_t)$ is the same for every tuple $(x_1,\dots,x_t)$ of $t$ distinct elements from $[n]$. Of course, any $t$-uniformity network gives rise to a  $t$-reachability network.

Using the restrictive nature of the transpositions, we are able to show that achieving $2$-uniformity is strictly harder than achieving $2$-reachability.

\begin{theorem}
	\label{thm:2unifstar}
	There exists $C>0$ such that any $2$-uniformity network on $n$ elements in which each transposition is a star transposition has at least $1.6 n - C$ transpositions.
\end{theorem}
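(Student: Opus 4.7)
The plan is to reduce the question to a bound on the number of positions that appear exactly once in the network. Write each $T_j = (1, b_j, p_j)$ and let $k_b = |\{j : b_j = b\}|$ for $b \in \{2, \dots, n\}$. Any 2-uniformity network is a 2-reachability network, so element $1$ must be able to reach every position and $k_b \geq 1$ for all $b$; hence $\ell = \sum_b k_b$. Let $U = \{b : k_b = 1\}$ and $u = |U|$. Then $\ell \geq u + 2(n-1-u) = 2(n-1) - u$, so it suffices to show $u \leq \tfrac{2}{5}(n-1) + O(1)$, which will give $\ell \geq \tfrac{8}{5}(n-1) - O(1) = 1.6n - C$.

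For each $b \in U$, let $i_b$ denote the unique step at which $b$ is used and $p_b$ the associated swap probability. Because no other transposition involves position $b$, element $1$ can end up at $b$ only by being at position $1$ just before step $i_b$ and then swapping; the same holds for element $2$. Therefore
\[
\Pr[P_\ell = b] = p_b\,\Pr[P_{i_b-1} = 1] \quad\text{and}\quad \Pr[Q_\ell = b] = p_b\,\Pr[Q_{i_b-1} = 1].
\]
By 2-uniformity both equal $1/n$, so $\Pr[P_{i_b-1} = 1] = \Pr[Q_{i_b-1} = 1] = 1/(n p_b)$. Since elements $1$ and $2$ cannot simultaneously occupy position $1$, these probabilities sum to at most $1$, giving $p_b \geq 2/n$.

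To sharpen this into the desired bound on $u$, I would next exploit the joint constraint $\Pr[P_\ell = b, Q_\ell = c] = 1/(n(n-1))$ for pairs $b, c \in U$. When $i_b < i_c$, this event forces the system to trace the path $(1, q) \to (b, q) \to \dots \to (b, 1) \to (b, c)$, where the middle segment corresponds to element $2$ walking from $q$ to position $1$ using the transpositions strictly between $i_b$ and $i_c$, none of which involve $b$ or $c$. Summing this equation over all ordered pairs in $U \times U$ and reorganising the sum, the middle walk factor aggregates into the global expected occupation time of element $2$ at position $1$, which can in turn be bounded in terms of $\ell$. Matching the two sides should yield a quadratic inequality of the form $u^2 \leq C_1 u + C_2 \ell/n$; combining with $\ell \geq 2(n-1) - u$ forces $u \leq \tfrac{2}{5}(n-1) + O(1)$ and hence $\ell \geq 1.6n - C$.

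The main obstacle is controlling the middle walk probability cleanly. Pointwise, $\Pr[Q_{i_c-1} = 1 \mid (P_{i_b}, Q_{i_b}) = (b, q)]$ depends on the full sub-configuration of the network between steps $i_b$ and $i_c$, and no easy pointwise bound is available. The right route appears to be to swap the order of summation so that the inner quantity becomes the total expected number of visits of element $2$ to position $1$ across the network, which can be bounded by an argument parallel to the one for element $1$. The factor $8/5$ then emerges from balancing this estimate against the trivial lower bound $\ell \geq 2(n-1) - u$.
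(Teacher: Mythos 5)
There is a fatal gap at the very first step: the reduction to the bound $u \leq \tfrac{2}{5}(n-1)+O(1)$ is a reduction to a \emph{false} statement. The paper itself exhibits a $2$-uniformity network of star transpositions of length $2n-3$, namely
\[
\left(1,2,\tfrac12\right),\left(1,3,\tfrac2n\right),\left(1,2,\tfrac12\right),\left(1,4,\tfrac{2}{n-1}\right),\dots,\left(1,2,\tfrac12\right),\left(1,n,\tfrac23\right),\left(1,2,\tfrac12\right),
\]
in which every position $b\in\{3,\dots,n\}$ is used exactly once, so $u=n-2$. Thus $u$ can be as large as $n-O(1)$ in a genuine $2$-uniformity network, and no argument can establish $u\leq 0.4n+O(1)$. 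The underlying problem is that the inequality $\ell\geq 2(n-1)-u$ charges each singly-used position only $1$ and each other position only $2$; it cannot see that a single position (here, position $2$) may be reused $\Theta(n)$ times to ``reload'' position $1$ between consecutive singletons. In this example your bound gives only $\ell\geq n$, even though $\ell=2n-3$. The speculative second half (summing the pair constraints to get $u^2\leq C_1u+C_2\ell/n$) would therefore also have to fail, and indeed the proposed combination is logically suspect: $\ell\geq 2(n-1)-u$ is a lower bound on $\ell$, so it cannot be used together with an upper bound on $u$ in terms of $\ell$ to cap $u$. The local facts you do prove ($p_b\geq 2/n$ for $b\in U$, via $\Pr[P_{i_b-1}=1]+\Pr[Q_{i_b-1}=1]\leq 1$) are correct but do not bear on the length.

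The paper's proof takes a structurally different and workable route: rather than bounding the number of singly-used positions, it runs a discharging argument in which each multiply-used transposition donates weight to the nearby singletons $\sigma_i$, and the key lemma is that between $\sigma_{i-1}$ and $\sigma_i$ or between $\sigma_i$ and $\sigma_{i+1}$ there must be either two last-occurrences, or a middle occurrence, or a first occurrence of $(1,2)$. That lemma is where $2$-uniformity (as opposed to mere $2$-reachability) enters, via a comparison of the probabilities that a counter ends at $\ell$ versus at $b$. If you want to salvage your approach, the quantity to control is not $u$ but the amount of ``reloading'' forced locally around each singleton.
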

This is the only setting in which a separation between reachability and uniformity is known.\footnote{After a preliminary version of this paper was put on arXiv, Conjecture \ref{cj9} was proven by \cite{janzer2022partial}, and this shows separation between 2-reachability and 2-uniformity even without the restriction to star transpositions.}
In the setting of star transpositions, we also provide a lower bound on the length of $t$-reachability networks (see Proposition \ref{prop:star-reach}).

The remainder of the paper is organised as follows. We settle the case of 2-reachability (Theorem \ref{thm:2reach}) in Section \ref{sec:reachability} and prove Theorem \ref{thm:treach} in Section \ref{sec:treach}. In Section \ref{sec:improved-star} we study star transpositions, providing a separation between 2-reachability and 2-uniformity (Theorem \ref{thm:2reachstar} and Theorem \ref{thm:2unifstar}). We finish with some open problems in Section \ref{sec:discussion}.

\section{Exact bound for 2-reachability}
\label{sec:reachability}
We first prove Theorem \ref{thm:2reach} which asserts that the smallest  $2$-reachability network on $n$ elements has length $\lceil 3n/2\rceil-2$.

\begin{proof}[of Theorem \ref{thm:2reach}]
	We first give an upper bound construction. Let $n\geq 2$ be given. Our sequence of transpositions starts with the following transpositions, in order.
	\begin{enumerate}
		\item $(1,2)$.
		\item $(1,x)$ for all odd $3 \leq x \leq n$.
		\item $(2,y)$ for all even $4 \leq y \leq n$.
		\item $(x, x+1)$ for all odd $3 \leq x \leq n -1$.
	\end{enumerate}
	If $n$ is odd, we also add the transposition $(1,2)$ at the end, which is needed to reach the position $(1,n)$. This defines a sequence of transpositions of length $\ceil{3n/2}-2$, and it is straightforward to check that this sequence forms a 2-reachability network.

	\begin{figure}
		\centering
		\begin{tikzpicture}
			\draw node[vertex](1) at (0,0) {};
			\draw node[vertex](2) at (3, 0) {};
			\draw node[vertex](3) at (135:1) {};
			\draw node[vertex](5) at (105:1) {};
			\draw node[vertex](7) at (75:1) {};
			\draw node[vertex](9) at (45:1) {};
			\draw node[vertex](4) at ($(3,0) + (120:1)$) {};
			\draw node[vertex](6) at ($(3,0) + (90:1)$) {};
			\draw node[vertex](8) at ($(3,0) + (60:1)$) {};

			\draw (1) -- (2);
			\draw (1) -- (3);
			\draw (1) -- (5);
			\draw (1) -- (7);
			\draw (1) -- (9);
			\draw (2) -- (4);
			\draw (2) -- (6);
			\draw (2) -- (8);
			\draw[red] (3) to[bend left = 45] (4);
			\draw[red] (5) to[bend left = 45] (6);
			\draw[red] (7) to[bend left = 45] (8);
			\draw[red] (1) to[bend left] (2);

			\draw node[below = 0.2cm] at (1) {1};
			\draw node[below = 0.2cm] at (2) {2};
			\draw node[above right] at (9) {9};
		\end{tikzpicture}
		\caption[The multigraph corresponding to the minimum 2-reachable sequence given in the proof of Theorem \ref{thm:2reach}.]{The multigraph corresponding to the minimum 2-reachable sequence given in the proof of Theorem \ref{thm:2reach}. All vertices except $1$ and $9$ have deficiency 0.}	\label{fig:reach_example_1}
	\end{figure}
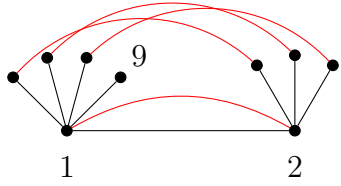

	The majority of the work in this proof is in showing the lower bound. Suppose that $\sigma=(\sigma_1,\dots,\sigma_\ell)$ is a shortest  $2$-reachability network on $n\geq 2$ elements. Given two (distinct) counters on positions $1$ and $2$, each subsequence of $\sigma$ defines a permutation that moves the counters to new (distinct) positions $x,y\in \{1,\dots,n\}$. By the definition of $2$-reachability, it must be possible to reach any such pair of distinct positions $(x,y)$.

	We will consider a process on an auxiliary multigraph $G$ with vertex set $\{1, \dots, n\}$. We initialise $G$ as the empty graph and process the transpositions in order, adding an edge for each. We maintain a set of active vertices, which starts as the set $\{1,2\}$. Any other vertex becomes active when it first appears in a transposition with an active vertex. Note that no counter can be sitting on an inactive vertex, and (by minimality) no transposition joins two inactive vertices.
	When we process a transposition $(a,b)$, we add an edge $ab$ to $G$. We colour the edge black if one of $a$ or $b$ was inactive, and red otherwise (with one exception detailed later).
    Note that the process begins by growing two trees of black edges from the vertices 1 and 2, which we denote $T_1$ and $T_2$ respectively, and that throughout the process, the black edges form a forest consisting of at most two trees.
    An example of the final multigraph (for the above upper bound construction when $n = 9$) is drawn in Figure \ref{fig:reach_example_1}.

	We now modify our sequence to obtain a sequence with a nicer form (and at most the same length).
	Let $c_1$ and $c_2$ denote the counters that start on $1$ and $2$ respectively, and let $(a,b)$ where $a \in V(T_1)$ and $b \in V(T_2)$ be the transposition at the first time the two trees meet. Before this time, the counter $c_1$ is contained in $V(T_1)$ and the counter $c_2$ is contained in $V(T_2)$, so we may replace the initial sequence of transpositions, up to but not including $(a,b)$, by the sequence which starts with $(1, v)$ for all $v \in V(T_1) \setminus \{1\}$ and then continues $(1,u)$ for all $u \in V(T_2) \setminus \{2\}$. This has not increased the number of transpositions and the counters can reach any position $(x,y) \in V(T_1) \times V(T_2)$, so the new sequence can reach every pair that the original sequence could. Observe also that there is no longer any distinction at this point between the positions in $V(T_1)$ and, separately, between the positions in $V(T_2)$. Hence, by switching all instances of $1$ and $a$ and all instances of $2$ and $b$, we can assume that $(a,b) = (1,2)$. 
    Although both $1$ and $2$ are active vertices, we colour this particular copy of the edge $(1,2)$ black, so the black edges will form a spanning tree which we view as rooted at the pair of vertices $\{1,2\}$.

	We now prove that the sum of the red degrees (counted at the end of the process) is at least $n-2$, which shows that the number of edges in $G$ (and transpositions in our sequence) is at least $n-1+\ceil{(n-2)/2}=\ceil{3n/2} - 2$, as desired. We first need some more definitions. The \emph{subtree} $T_v$ rooted at $v$ is the set of vertices $u$ (including $v$) such that the unique path from $u$ to $\{1,2\}$ in the black tree passes through $v$. Any vertex in the subtree rooted at $v$ is said to be \emph{above} $v$.
	The \emph{deficit} of a vertex $v$ is defined as
	\begin{equation*}
		\deficit(v)=|V(T_v)|-\sum_{u\in V(T_v)} d_R(u),
	\end{equation*}
	where $d_R(u)$ denotes the number of red edges incident to $u$. Note that the red edges may join vertices of $T_v$ to vertices outside $T_v$, so may not sit inside the tree. In order to bound the sum of the red degrees, we will inductively bound the deficit of the vertices.

	For each vertex $v\not\in \{1,2\}$ there is a unique vertex adjacent to $v$ on the unique path in the black tree from $\{1,2\}$ to $v$. We call this the \emph{parent} of $v$ and we denote it by $p(v)$ (which could equal $1$ or $2$). The \emph{children} $\mathcal{C}(v)$ of a vertex $v$ are the vertices $u$ for which $p(u) = v$. We may rewrite the formula for the deficit of $v$ in the following inductive manner.
	\begin{equation*}
		\deficit(v) = 1 - d_{R}(v) + \sum_{u \in \mathcal{C}(v)} \deficit(u).
	\end{equation*}
	Note that the first edge that can carry a counter to $v \not\in \{1,2\}$ is the black edge to its parent; if $v$ is not incident to a red edge, then this is the only way to get a counter to $v$.

	We show the following claim inductively, which we will extend to $v \in \{1,2\}$ to finish the proof.

	\begin{claim}\label{claim:6}
		Every vertex $v \not \in \{1,2\}$ has deficit at most 1. Moreover, if the deficit of $v$ is equal to 1, then there is a vertex $\ell$ such that the only way for a counter to reach $\ell$ is to enter $v$ using the black edge $p(v) v$ and then to follow the path of black edges from $v$ to $\ell$.
	\end{claim}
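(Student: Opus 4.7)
The plan is to prove the claim by induction on the size of $T_v$. The base case $|T_v|=1$ is immediate: $v$ is a leaf of the black tree and $\deficit(v)=1-d_R(v)$, so $\deficit(v)\leq 1$ always, and if equality holds then $v$ has no red edges, whence the only edge incident to $v$ is $p(v)v$ and we may take $\ell=v$ as witness.

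For the inductive step, expanding
\[
\deficit(v) = 1 - d_R(v) + \sum_{u \in \mathcal{C}(v)} \deficit(u) \leq 1 - d_R(v) + |I|,
\]
where $I = \{u\in\mathcal{C}(v) : \deficit(u)=1\}$, reduces the bound $\deficit(v)\leq 1$ to showing $d_R(v)\geq |I|$: each deficit-$1$ child of $v$ should be accompanied by a red edge at $v$. I would prove this by fixing, for each $u\in I$, the witness $\ell_u$ supplied by the inductive hypothesis, and then invoking $2$-reachability on the ordered pair $(\ell_u,\ell_{u'})$ for distinct $u,u'\in I$. Such a pair can be realised only if two counters occupy $v$ just before the transpositions $(v,u)$ and $(v,u')$, respectively, each having entered $v$ along some incident edge at a strictly earlier time. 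The plan is to count these entry events as we vary $(u,u')$: the edge $p(v)v$ and each red edge at $v$ provide "direct" entries, while an entry via a sibling edge $vu_k$ with $u_k\notin\{u,u'\}$ would require a spare red edge inside $T_{u_k}$ and therefore force $\deficit(u_k)<1$, consuming slack elsewhere in the sum. A careful accounting, running over enough pairs, should then force $d_R(v)\geq |I|$.

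For the moreover clause, suppose $\deficit(v)=1$. If $I$ is nonempty, take any $u\in I$: I would argue that the witness $\ell_u$ from the inductive hypothesis also serves as a witness for $v$. Indeed, reaching $\ell_u$ already requires the counter to sit at $v$ just before the transposition $(v,u)$, and in the equality regime where $d_R(v)=|I|$ (forced by $\deficit(v)=1$), the entry-counting above shows that the only remaining entry available for the counter destined to $\ell_u$ is $p(v)v$. If $I$ is empty, then $\deficit(v)=1$ forces $d_R(v)=0$ and every child of $v$ contributes $0$ to the sum, and one produces $\ell$ by descending from $v$ through a sequence of deficit-$0$ children along black edges.

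The main obstacle is the quantitative inductive step: converting the qualitative statement that both counters must transit through $v$ into the numerical bound $d_R(v)\geq |I|$. The subtle point is that an entry to $v$ need not use a red edge at $v$ directly — a counter can enter $T_v$ via a red edge inside a child's subtree $T_{u_k}$ and later cross into $v$ through the sibling edge $vu_k$. The reason this does not weaken the bound is that each such indirect entry is "paid for" by the corresponding child having $\deficit(u_k)<1$, and the bookkeeping has to track precisely how the deficit balance across the children of $v$ is consumed by these alternative routes.
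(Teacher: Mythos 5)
Your overall strategy (induction, reducing $\deficit(v)\leq 1$ to $d_R(v)\geq|I|$, and extracting red edges at $v$ from the $2$-reachability of pairs of witnesses) matches the paper's, but the step you flag as the ``main obstacle'' is left unproven, and as framed it would not close. First, the obstacle is a phantom: a black edge $(v,u_k)$ to a child is by definition the transposition that activates $u_k$, so $u_k$ carries no counter at that moment and the edge can only move a counter \emph{out of} $v$, never into it. Hence the only ways a counter can ever enter $v$ are the single black edge $p(v)v$ (which precedes every other edge at $v$) and red edges incident to $v$; no bookkeeping of ``spare red edges inside $T_{u_k}$'' is needed. Second, your entry count is off by one: with $|I|$ deficit-$1$ children you would need $|I|$ entries into $v$ and would have $1+d_R(v)$ available, giving only $d_R(v)\geq|I|-1$ and hence $\deficit(v)\leq 2$. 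The paper orders the deficit-$1$ children $u_1,\dots,u_k$ by the time of their black edges, gets one red edge at $v$ strictly between consecutive $(v,u_i)$ and $(v,u_{i+1})$ from the pair $(\ell(u_i),\ell(u_{i+1}))$, and --- crucially --- one \emph{further} red edge after $(v,u_k)$ from the pair $(v,\ell(u_k))$, i.e.\ from the need to leave a counter sitting on $v$ itself at the end. Your sketch never invokes a target pair containing $v$, so it cannot produce this last red edge.

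The ``moreover'' clause also has two errors. When $I\neq\emptyset$ you cannot take an arbitrary $u\in I$: for any but the chronologically first deficit-$1$ child $x$, there are red edges at $v$ occurring before $(v,u)$, so a counter can enter $v$ without using $p(v)v$ and then reach $\ell(u)$; the witness must be $\ell(x)$ for the \emph{first} such child, using that all $|I|$ red edges forced above occur after $(v,x)$. When $I=\emptyset$, ``descending through deficit-$0$ children'' does not work --- a child of deficit $0$ may be reachable via a red edge inside its own subtree, so it yields no witness; the correct witness is simply $\ell=v$, since $d_R(v)=0$ means $p(v)v$ is the only edge that can carry a counter onto $v$.
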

	\begin{proof}
		We prove the claim by induction on the height of the subtree rooted at $v \not\in \{1,2\}$. If the height is $0$, that is, $v$ is a leaf of the black tree, then the deficit is at most $1$ and equals $1$ if and only if $v$ can only be reached via the black edge from its parent $p(v)$.

		Now suppose that the claim has been shown up to height $h\geq 0$, and suppose that the subtree rooted at $v$ has height $h+1$.
		We first show that there are at least as many red edges incident to $v$ as there are children of $v$ with deficit 1, which implies that the deficit of $v$ is
		at most 1.

		If a vertex $u$ of deficit $1$ is above $v$ and connected to $v$ via a black edge, then by induction there exists a vertex $\ell(u)$ in the subtree rooted at $u$ which can only be reached using black edges from $v$. Let $vz$ be the last black edge from $v$ to a child of $v$ with deficit $1$. Then it is not possible to get counters into both $v$ and $\ell(z)$ unless there is a red edge
		incident with $v$
		after $(v,z)$. Similarly, if $v$ has black edges to vertices $x, y \in \mathcal{C}(v)$ that have deficit 1, then there has to be a red edge incident with $v$, produced by some transposition that occurs between the transpositions $(v,x)$ and $(v,y)$ else there is no way to reach $(\ell(x), \ell(y))$.

		Suppose now that $v$ has deficit exactly 1. We need to find a vertex $\ell$ which can only be reached using a path of black edges from the parent of $v$. If none of the children of $v$ have deficit 1, then the deficit of $v$ is $1 - d_R(v)$ and there are no red edges incident to $v$. Hence, we can take $\ell = v$. Otherwise, let $x$ be the first neighbour above $v$ which has deficit 1. Since, $v$ has deficit exactly 1, our argument above shows that there cannot be a red edge incident to $v$ which corresponds to a transposition before $(v, x)$. Hence, the only way of getting a counter into $\ell(x)$ is using the black edge to $v$, the edge $vx$ and the path of black edges from $x$ to $\ell(x)$, and so we may take $\ell = \ell(x)$.
		This completes the proof of Claim \ref{claim:6}.
	\end{proof}

	We now show that the vertices $1$ and $2$ have deficit at most $1$ as well. The difficulty in this case is that it is possible to put a counter on $1$ (say) using the black edge $(1,2)$. This means that we can put a counter on $\ell(x)$ and $\ell(y)$ without requiring a red edge between them. However, this does not take into account the fact that the counters are distinguishable,	and we only need to modify the argument from the claim slightly to make use of this.

	Let the counters starting in positions 1 and 2 be $c_1$ and $c_2$ respectively, and suppose that there is a child $x$ of $1$ which has deficit 1.
	Then there is a vertex $\ell(x)$ such that the only way for a counter to reach $\ell(x)$ is to follow the path of black edges from $1$ to $x$ to $\ell(x)$.
	If the transposition $(1,x)$ occurs before the black edge $(1,2)$, then there is no way to move the counter $c_2$ to $\ell(x)$, a contradiction. The black edge $(1,2)$ is the only way to put a counter on $1$ without using a red edge, so after this transposition we can apply a proof similar to that of the claim above to see that the deficit of 1 is at most 1. Indeed, if $x$ and $y$ are children of $1$ with deficit 1, then there has to be some transposition that can place a counter on $1$ in between $(1,x)$ and $(1,y)$ in order to reach $(\ell(x), \ell(y))$. We have just argued that this is not the black transposition $(1,2)$, and so it must be a red transposition. Likewise, there must be a red transposition after the last black edge to a vertex $z$ of deficit 1 in order to end with the counters in $1$ and $\ell(z)$.

	Since every vertex is in either the subtree rooted at 1 or the subtree rooted at 2 and the deficit of these two subtrees is at most $1$, the sum of the red degrees must be at least $n - 2$. Hence, there are at least $\ceil{(n-2)/2}$ red edges and at least $\ceil{3n/2}-2$ transpositions in the sequence.
\end{proof}

\section{Upper bound for \texorpdfstring{$t$}{t}-reachability}
\label{sec:treach}
We now give a probabilistic construction which shows that there are $t$-reachability networks on $n$ elements of length $(2+o_t(1))n$. This is much smaller than the best known constructions for uniformity and, surprisingly, the coefficient of the leading term is bounded. We remark that all the transpositions used in this construction are star transpositions.

\begin{proof}[of Theorem \ref{thm:treach}]
	Let $t\geq 3$ be a natural number and choose
	$\eps \in(0,1/(t+1))$.  Let $n>t$ and set $L = \floor{n^{1-\eps}}$. 	Recall that the counters start in positions $\{1, \dots, t\}$.

	Let $G$ be a random bipartite graph with vertices $A  \cup B = \{a_{t+1}, \dots, a_n\} \cup \{b_1, \dots, b_L\}$ constructed by adding two uniformly random edges from $a_j$ to $\{b_1, \dots, b_L\}$ for each $j$. The random transposition sequence begins with $L$ phases. In the $i$th phase, we add the transpositions $(1, j)$ for each $j \in \{2, \dots, t\}$ followed by $(1, j)$ for the $j \in \{t + 1,\dots n\}$ such that $a_j$ is adjacent to $b_i$. Finally, the sequence ends with a $t$-reachable network over the first $t$ positions.

	Fix positions $x_1,\dots,x_t\in \{1,\dots,n\}$. We need to find a subsequence that puts our $t$ counters into those positions. Let the $x_i$ which are in $\{t+1,\dots,n\}$ be $x_{i_1}, \dots, x_{i_m} = y_1, \dots, y_m$.

	Suppose that there is a matching $\{a_{y_j} b_{s_j} : j \in [m]\}$ in $G$ containing the vertices $a_{y_1}, \dots,  a_{y_m}$. For  $j \in [t]$, we can put the $j$th counter in position $y_j = x_{i_j}$ during phase $s_j$ using the transpositions $(1, j)$ and $(1, y_j)$. The remaining counters can easily be positioned using the $t$-reachable sequence at the end.

	It remains to show that there is some choice for $G$ such that every set of $s\leq t$ vertices from $A$ is in a matching.
	The probability that a given set of $s$ vertices from $A$ has at most $s-1$ neighbours in $B$ is at most
	\[ \binom{L}{s-1} \left( \frac{s-1}{L}\right)^{2s} \leq \frac{(s-1)^{2s}}{(s-1)!}L^{-(1+s)}. \]
	Hence, by the union bound, the probability that there is a set $A'\subseteq A$ of size at most $t$ and with at most $|A'|-1$ neighbours is at most
	\[ \sum_{s =3}^{t} \binom{n}{s} \frac{(s-1)^{2s}}{(s-1)!}L^{-(1+s)} \leq
		\sum_{s =3}^{t}  \frac{(s-1)^{2s}}{s!(s-1)!} \frac{n^s}{L^{s+1}}
		= O_t \left( n^{-1 + (t+1) \varepsilon} \right). \]
	Since $\eps < 1/(t+1)$, this probability is less than 1 for $n$ sufficiently large. This implies there exists a suitable choice for the graph $G$ (for sufficiently large $n$).

	Note that for $j \in \{t+1,\dots,n\}$, the  transposition $(1,j)$ appears exactly twice in the sequence since the vertex $a_j$ has degree exactly 2. We can create a $t$-reachable network  on the first $t$ positions using $O(t\log t)$ star transpositions.  Hence, there exists a $t$-reachability network using at most
	\[ (t-1)L + 2(n- t) + O(t\log t) = 2n + o_t(n) \]
	transpositions.
\end{proof}

\section{Star transpositions}
\label{sec:improved-star}

We now turn our attention to the setting where all transpositions are of the form $(1,\cdot)$ and prove Theorem \ref{thm:2reachstar}, which shows that restricting to star transpositions leads to only a small difference in the number of transpositions needed for $2$-reachability. In fact, there is no difference when $n$ is odd, and they differ by only 1 when $n$ is even.

Let us first consider the upper bound. Consider the following sequence: start with the transposition $(1,2)$ so that the two counters are indistinguishable, then sweep through the even numbers and potentially load one of them with a counter. Finally, sweep through every position. That is,
\begin{enumerate}
	\item $(1,2)$,
	\item $(1,4), (1,6), \dots, (1, 2\floor{n/2})$,
	\item $(1,2),(1,3), (1,4), \dots, (1,n)$.
\end{enumerate}
It is easy to see that the counters can reach $(x,y)$ when $x$ and $y$ are not both odd: simply place one counter in an even position in the first sweep and place the other counter in the second sweep. To put a counter on $x$ and $y$ when both are odd and $1 < x < y$, we load $(x+1)$ in the first sweep and use this to ``reload'' $1$ before the transposition $(1,y)$. When $x = 1$, this sequence works when $n$ is even and we can use $(1,n)$ to leave a counter on 1, but it breaks down when $n$ is odd. This could be fixed by appending the transposition $(1,2)$ say, but it is possible to do slightly better with the following ``twisted" sequence. Let $n = 2m +1$. The following sequence of transpositions is 2-reachable.

\begin{enumerate}
	\item $(1,2)$,
	\item $(1,4), (1,6), \dots, (1,2m)$,
	\item $(1,3),(1,2), (1,5), (1,4), \dots, (1,2m+1), (1,2m)$.
\end{enumerate}

\begin{proof}[of Theorem \ref{thm:2reachstar}]
	The upper bound is given in the discussion above, so we only need to prove a matching lower bound. For this we proceed much as in the proof of Theorem \ref{thm:2reach}. We colour the transposition $(1,x)$ black if it is the only occurrence, red if it has occurred previously and blue otherwise (i.e.~if it is a first occurrence, but will occur later as well). There are $n-1$ transpositions that are either black or blue, and it suffices to show that there are at least $(n-1)/2$ red transpositions.

	We claim that the number of red transpositions is at least the number of black transpositions. Let the two counters be $c_1$ and $c_2$. Suppose $(1,x)$ is the last black transposition where $x \geq 3$. Then there needs to be a red transposition after $(1,x)$ in order to leave the counters $c_1$ and $c_2$ in $1$ and $x$ respectively. Similarly, in between any pair of black transpositions $(1,x)$ and $(1,y)$ where $x,y \geq 3$ there has to be a red transposition in order to leave the counters $c_1$ and $c_2$ in $y$ and $x$ respectively. This shows the claim when $(1,2)$ is used multiple times, and there is no black $(1,2)$.

	If $(1,2)$ is used exactly once, then it must be the first black transposition. Indeed, if $(1,x)$ is a black transposition where $x \geq 3$, then there is no way to put the counter $c_2$ in $x$ unless $(1,2)$ has already occurred. If $(1,2)$ were to be the only black transposition, then $(1,x)$ must appear twice for every $x \geq 3$ and there are $2(n-2) + 1$ transpositions. This is at least $\ceil{3(n-1)/2}$ for $n \geq 3$ and we would therefore be done. Hence, if $(1,2)$ is used exactly once, it must be the first but not the last black transposition. The above argument shows there is a red transposition after the last black transposition and in between every pair of black transpositions $(1,x)$ and $(1,y)$ where $x,y \geq 3$. It is enough to show there is a red transposition between $(1,2)$ and the first black transposition $(1,x)$ where $x \geq 3$, and this is easy to argue: there must be a red transposition between $(1,2)$ and $(1,x)$ in order to leave $c_1$ in $x$ and $c_2$ in $1$.

	By definition, the number of red transpositions is at least the number of blue transpositions. Since there is a total of $n-1$ black and blue transpositions, there must be at least $(n-1)/2$ of one of them and there are at least $(n-1)/2$ red transpositions.
\end{proof}

The lower bound for 2-reachability above immediately gives a lower bound for the more difficult problem of constructing a 2-uniformity network with star transpositions but, using the restrictive nature of the transpositions, we can show that a 2-uniformity network has length at least $1.6 n - C$, a constant factor higher. This confirms, for this specific case, that the problem of uniformity is
strictly more difficult than reachability.

\begin{proof}[of Theorem \ref{thm:2unifstar}]
	We use a discharging argument to show that, after disregarding a constant number of the transpositions, the value 1.6 is a lower bound on the average number of times that a star transposition is used.

	Let $T=(T_1,\dots,T_\ell)$ be a sequence of star transpositions and suppose that $T$ is a 2-uniformity network on $n$ elements.
	We assign each transposition $(1,a)$ a weight equal to the number of times $(1, a)$ is used in $T$.
	Let $\sigma_1, \dots, \sigma_m$ be the transpositions of $T$ which are used exactly once and are of the form $(1, x)$ for $x \geq 3$. We transfer weight from the transpositions which are used multiple times to the transpositions which are used exactly once according to the following rules.
	\begin{itemize}
		\item If the last appearance of $(1,a)$ and $(1,b)$ is between $\sigma_i$ and $\sigma_{i+1}$, then they each transfer $0.3$ to $\sigma_i$ and $0.1$ to $\sigma_{i+1}$.
		\item If there is only one transposition which appears for the last time between $\sigma_i$ and $\sigma_{i+1}$, it transfers $0.4$ to $\sigma_i$.
		\item Each transposition which appears between $\sigma_i$ and $\sigma_{i+1}$ for neither the first time nor the last time, transfers $0.6$ to $\sigma_i$ and $0.2$ to $\sigma_{i+1}$.
	\end{itemize}
	If $(1,a)$ is used exactly twice, then it transfers $0.4$ and ends with weight $1.6$. A transposition used more than twice transfers $0.8(i-2) + 0.4$ so ends with weight $1.6 + 0.2(i-2)$. Hence, we only need to show that all but a constant number of the transpositions that are used once (the $\sigma_i$) end with weight at least $1.6$.

	Let $\sigma_{i-1} = (1,a)$ and $\sigma_{i} = (1,b)$. In order to end with counters in both $a$ and $b$, there must a transposition between $\sigma_{i-1}$ and $\sigma_{i}$ which can place a counter in position 1. In particular, there is either a transposition which is not appearing for the first time, or $(1,2)$ appearing for the first time. This immediately shows that all but one of $\sigma_1, \dots, \sigma_{m-1}$ have weight at least 1.4. We claim that in between either $\sigma_{i-1}$ and $\sigma_{i}$ or between $\sigma_{i}$ and $\sigma_{i+1}$ one of the following must hold:
	\begin{enumerate}
		\item there are at least two transpositions appearing for the last time,
		\item there is a transposition appearing for neither the first nor the last time,
		\item there is the transposition $(1,2)$ appearing for the first time.
	\end{enumerate}
	If one of the first two cases occurs, then $\sigma_i$ ends with weight at least 1.6, while the last case can only occur twice. This analysis does not apply to $\sigma_1$ and $\sigma_m$, and the number of $\sigma_i$ which end up with weight less than 1.6 is at most four.

	We now prove the above claim. Let $\sigma_{i-1} = (1,a)$, $\sigma_{i} = (1,b)$ and $\sigma_{i+1} = (1,c)$ and assume that none of the conditions above hold. In between $\sigma_{i-1}$ and $\sigma_i$, there can be only transpositions used for the first time and a single transposition $(1, \ell)$ used for the last time. We may write the sequence as

	\begin{equation}\label{eqn:seq}
		(1,a),(1,f_1),\dots,(1,f_k),(1,\ell),(1,f_{k+1}),\dots,(1,f_s),(1,b)
	\end{equation}
	where the $f_j$ are distinct, not equal to 2 and are appearing for the first time.

	The only way to end the sequence of lazy transpositions with the counters in $\{a,b\}$, $\{a, \ell\}$ or $\{b,\ell\}$ is to start the sequence (\ref{eqn:seq}) with the two counters in $\{1, \ell\}$. Let $p_j$ be the probability associated with the transposition $(1,j)$ in (\ref{eqn:seq}). The probability that this sequence ends with counters in $\{a,b\}$ and $\{a, \ell\}$ must be equal, so
	\[p_a(1-p_\ell) = p_a p_\ell (1-p_{f_{k+1}})\cdots (1-p_{f_s})p_b. \]
	In particular, if $q = (1-p_{f_{k+1}})\cdots (1-p_{f_s})$, then $1-p_\ell=p_\ell q p_b$ which implies $p_\ell = 1/(qp_b+1)\geq 1/2$.
	We now claim that the probability that a counter ends in $\ell$ is strictly higher than the probability that a counter ends in $b$ unless $q p_b = 1$.  We condition on which of $1$ and $\ell$ contain a counter before the transposition $(1, \ell)$ and calculate the probability that a counter ends in each of the positions $\ell$ and $b$.
	\begin{center}
		\begin{tabular}{ccccc}
        \toprule
            & \multicolumn{4}{c}{Counter positions} \\
            \cmidrule(lr){2-5}
			Event   & \{1,$\ell$\} & \{$\ell$\}    & \{1\}            & $\emptyset$ \\
            \midrule
			Counter ends in $\ell$ & 1            & $1-p_\ell$    & $p_\ell$         & 0           \\
			Counter ends in $b$    & $qp_b$       & $p_\ell qp_b$ & $(1-p_\ell)qp_b$ & 0           \\
            \bottomrule
		\end{tabular}
        \medskip
	\end{center}
	In every case, the probability that a counter ends in $\ell$ is at least the probability that a counter ends in $b$, and the first one is strict unless $qp_b = 1$.

	Now we consider the transpositions between $(1,b)$ and $(1,c)$:
	\[
		(1,b),(1,f_1'),\dots,(1,f_k'),(1,\ell'),(1,f_{k+1}'),\dots,(1,f_s'),(1,c).
	\]
	Since the transposition $(1,b)$ fires with probability 1, there is no way for the counters to end in both $\ell'$ and $c$, giving a contradiction and proving the claim.
\end{proof}

We have seen in Theorem \ref{thm:2reachstar} that the shortest 2-reachability network using only star transpositions has length approximately $3n/2$. It is also possible to construct a 3-reachable network using $5n/3 + C$ such transpositions. Using a similar argument to the proof of Theorem \ref{thm:2unifstar}, we offer the following lower bound for arbitrary $t$ when using star transpositions, which is tight up to additive constants when $t = 2, 3$.
\begin{proposition}
	\label{prop:star-reach}
	For each $t\geq 1$, there is a constant $C_t$ such that any $t$-reachability network on $n$ elements using only star transpositions  has length at least $(2-1/t)n-C_t$.
\end{proposition}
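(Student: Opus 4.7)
I will generalise the discharging argument in the proof of Theorem~\ref{thm:2unifstar}. Write the $t$-reachability network as $T = (T_1, \ldots, T_\ell)$ of star transpositions, and for each $a \in \{2, \ldots, n\}$ let $k_a$ be the number of times $(1, a)$ appears in $T$. Since every position must be reachable, $k_a \geq 1$ throughout, and the length is $\ell = \sum_a k_a$. Let $|S|$ be the number of $a$ with $k_a = 1$ (the ``singles''). Since $k_a \geq 2$ for the remaining positions, $\ell \geq |S| + 2(n-1-|S|) = 2(n-1) - |S|$, so it suffices to establish the bound $|S| \leq (n-1)/t + O_t(1)$, which directly yields $\ell \geq (2-1/t)n - C_t$.

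List the singles of the form $(1, x)$ with $x \geq t+1$ as $\sigma_1, \sigma_2, \ldots$ in the order they appear in $T$; the at most $t-1$ singles with $x \leq t$ contribute $O_t(1)$ and are absorbed into the error. Assign each occurrence of each transposition in $T$ initial weight~$1$, so that the total weight is $\ell$, and redistribute onto the $n-1$ distinct transposition types using donation rules analogous to the $0.3/0.1$, $0.4$, and $0.6/0.2$ splits of Theorem~\ref{thm:2unifstar}, tuned so that each multi-use transposition of multiplicity $k$ retains weight at least $2 - 1/t$ while donating its excess $k - (2-1/t)$ to nearby singles, and every single ends with weight at least $2 - 1/t$ except for $O_t(1)$ boundary exceptions.

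The structural input to the discharging is the analogue of the ``red between consecutive blacks'' claim in the proof of Theorem~\ref{thm:2reachstar}: around each $\sigma_i$ the sequence must contain enough non-first occurrences of multi-use transpositions to enable any adversarial target. I would prove this by examining targets of the form ``$c_1 \to x_i$, $c_k \to x_j$, and $c_l$ held at $l$ for $l \notin \{1, k\}$'' for each $k \in \{2, \ldots, t\}$ and each relevant pair of singles $\sigma_i, \sigma_j$, as well as permuted variants in which a counter other than $c_1$ is placed by $\sigma_i$. Tracing the required subsequence as in the proofs of Theorems~\ref{thm:2reach} and \ref{thm:2reachstar} forces non-first occurrences of certain types to lie in the gap between $\sigma_i$ and $\sigma_j$, and summing over the $t-1$ choices of $k$ produces the amount of reloading needed to fuel the discharging.

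The main obstacle is proving this structural lemma with the correct numerical strength. Because $t$-reachability is flexible enough that counters may be routed through intermediate ``storage'' positions, one cannot simply force a particular transposition into a particular gap as in the $t=2$ proof; instead the analysis must track the flow of several counters through position $1$ simultaneously and show that, whichever routings are chosen by the adversary, the total demand on non-first occurrences forces the average multiplicity to be at least $2 - 1/t - O_t(1/n)$. Once this is established, the discharging calculation is a routine generalisation of the one in Theorem~\ref{thm:2unifstar} and the conclusion $\ell \geq (2 - 1/t)n - C_t$ follows.
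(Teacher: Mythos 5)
Your overall framework matches the paper's: weight each transposition type by its multiplicity, list the single-use transpositions $\sigma_1,\dots,\sigma_m$ of the form $(1,x)$ with $x\notin[t]$, discharge from multi-use transpositions into the gaps between consecutive $\sigma_i$, and discard the $O_t(1)$ gaps containing a first occurrence of some $(1,x)$ with $x\in[t]$. The reduction to bounding the number of singles is also sound. However, there is a genuine gap exactly where you flag ``the main obstacle'': you never prove the structural lemma that fuels the discharging, and the family of targets you propose (pairs of singles $\sigma_i,\sigma_j$ with various counters $c_k$ routed to them) does not obviously produce the required count of $t-1$ reloads \emph{per gap}; you yourself concede you do not see how to control the adversary's routing through intermediate storage positions.

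The paper closes this with one specific, clean choice of target that you are missing. Fix a gap between $\sigma_i=(1,a)$ and $\sigma_{i+1}=(1,b)$, assume every transposition in the gap is either a first or a last occurrence, and let $s_1',\dots,s_{\ell'}'$ be the positions whose transpositions appear in the gap for the last time. If $\ell'\le t-2$, then the target set $\{a,s_1',\dots,s_{\ell'}',b\}$ has at most $t$ elements and must be reachable. But $\sigma_i$ is the unique occurrence of $(1,a)$, so it must fire to place a counter on $a$; to then place a counter on $b$ via $\sigma_{i+1}$, position $1$ must be reloaded within the gap, and (having excluded first occurrences of $(1,x)$ for $x\in[t]$ and middle occurrences) the only candidates are the last occurrences $(1,s_j')$ --- using one to reload forfeits the counter that was supposed to end at $s_j'$. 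Contradiction, so $\ell'\ge t-1$. With the donation rules ``$1/t$ from each last-but-not-first occurrence, $1$ from each middle occurrence,'' every multi-use type retains $2-1/t$ and every undiscarded $\sigma_i$ (except $\sigma_m$) receives at least $\min(1,(t-1)/t)=(t-1)/t$, finishing the count. Without this target choice (or an equivalent), your plan does not yield the stated bound.
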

\begin{proof}
	We will again apply a discharging argument.
	Let $T$ be a given $t$-reachability network consisting of star transpositions.
	As before, we assign the transposition $(1,a)$ a weight equal to the number of times $(1,a)$ is used
	and let $\sigma_1,\dots,\sigma_m$ be the subsequence of transpositions that are used exactly once and are of the form $(1,x)$ for $x \not \in [t]$. We transfer weight from a transposition used multiple times to the $\sigma_i$ using the following simple rules.
	\begin{itemize}
		\item For each transposition between $\sigma_i$ and $\sigma_{i+1}$ used for the last time (but not the first), transfer $1/t$ to $\sigma_i$.
		\item For each transposition between $\sigma_i$ and $\sigma_{i+1}$ used for neither the first time nor the last time, transfer 1 to $\sigma_i$.
	\end{itemize}
	We will show that all but at most $2t$ transpositions end up with weight at least $2-1/t$.
	Clearly, any transposition which is used multiple times ends with weight $2 - 1/t$ as required.
	We will ignore any $\sigma_i,\sigma_{i+1}$ for which some transposition $(1,x)$ with $x \in [t]$ occurs for the first time between them. This disregards at most $2(t-1)$ of the $\sigma_i$.

	We show that all the other $\sigma_i$ (except $\sigma_m$) get weight at least $2-1/t$.
	Consider the section between $\sigma_i = (1,a)$ and $\sigma_{i+1} = (1,b)$
	\[
		(1, a), (1, s_1), \dots, (1, s_\ell), (1, b).
	\]
	If there is a transposition between $\sigma_i$ and $\sigma_{i+1}$ used for neither the first nor the last time, then $\sigma_i$ has weight at least 2.
	So we assume all $(1,s_j)$ are used for either the first or last time.  Let $s_1', \dots, s_{\ell'}'$ be the transpositions used for the last time. If $\ell'\geq t-1$, then $\sigma_i$ receives weight at least $2 - 1/t$ as desired, and we show that this is always the case.

	We show this by proving that we cannot simultaneously leave counters in the positions $\{a, s_1', \dots, s_{\ell'}', b\}$ (which we should be able to do if the set has at most $t$ elements).
    The transposition $\sigma_i = (1,a)$ must be used (as this is the only way to put a counter on $a$) and then the only way for a counter to `reload' position 1 before $\sigma_{i+1}$ is using a transposition $(1, s_{j}')$ that has been used before.
	(Here we use our assumption that no $(1,x)$ appears for the first time in our segment for $x\in [t]$.)
	When we use $(1,s_j')$ to `reload', then no counter can end in position $s_{j}'$ since this is the last use of this transposition. Hence, we cannot `reload' position $1$ to leave a counter in $b$, and we get a contradiction.
\end{proof}

\section{Open problems}
\label{sec:discussion}

We determined exactly the minimum number of transpositions needed in a $2$-reachable network, and gave an upper bound for large values of $t$, but even the asymptotics are still open for large $t$. We also gave lower bounds for $t$-reachable networks using star transpositions, although there is still a gap between the upper and lower bounds even in this restrictive case.

\begin{problem}
What is the minimum number of transpositions needed in a $t$-reachable network? What if all transpositions are of the form $(1,\cdot)$?
\end{problem}

We proved in Theorem \ref{thm:2unifstar} that there is a gap between 2-uniformity and 2-reachability when restricting to star transpositions, although even the correct asymptotics for $2$-uniformity are unknown. It is not hard to construct a 2-uniformity network of length $2n-3$, e.g.~consider the sequence of lazy transpositions
\[
	(1,2, \tfrac{1}{2}),(1,3, \tfrac{2}{n}),(1,2,\tfrac12),(1,4, \tfrac2{n-1}),\dots,(1,2, \tfrac12),(1,n, \tfrac23),(1,2, \tfrac12),
\]
and we conjecture that no smaller sequences exist.\footnote{After a preliminary version of this paper was put on arXiv, Conjecture \ref{cj9} was proven by \cite{janzer2022partial}.}
\begin{conjecture}\label{cj9}
	For $n \geq 2$,  the minimum number of lazy transpositions in a 2-uniformity network is $2n -3$.
\end{conjecture}
We remark that, if this were to be true, it would match nicely with selection networks.

\bibliographystyle{abbrvnat}
\bibliography{short-reachability-networks}
\label{sec:biblio}

\end{document}